\documentclass{amsart}

\usepackage{amsthm}
\usepackage[leqno]{amsmath}
\usepackage{latexsym,amsfonts,amssymb}
\usepackage[all]{xy} \SelectTips{eu}{} \SilentMatrices
\usepackage[draft]{hyperref}







\newcommand{\m}{\mathfrak{m}}

\newcommand{\p}{\mathfrak{p}}


\newcommand{\xra}[2][]{\xrightarrow[#1]{\;#2\;}}


\newcommand{\Rmk}{(R,\m,\k)}





\newcommand{\dimR}{\operatorname{dim}R}

\newcommand{\Spec}[1]{\operatorname{Spec}#1}

\newcommand{\ann}[2][R]{\operatorname{ann}_{#1}#2}

\newcommand{\dpt}[2][R]{\operatorname{depth}_{#1}#2}


\newcommand{\Hom}[3][R]{\operatorname{Hom}_{#1}(#2,#3)}

\newcommand{\Ext}[4][R]{\operatorname{Ext}_{#1}^{#2}(#3,#4)}



\hyphenation{mo-dule com-plex com-plex-es mor-phism ho-mo-mor-phism
iso-mor-phism pro-jec-tive in-jec-tive re-so-lu-tion ho-mo-lo-gy
ho-mo-lo-gi-cal ho-mo-lo-gi-cally du-a-liz-ing re-si-due}


\makeatletter
\def\@nobreak@#1{\mathchoice%
  {\nobreakdef@\displaystyle\f@size{#1}}%
  {\nobreakdef@\nobreakstyle\tf@size{\firstchoice@false #1}}%
  {\nobreakdef@\nobreakstyle\sf@size{\firstchoice@false #1}}%
  {\nobreakdef@\nobreakstyle\ssf@size{\firstchoice@false #1}}%
  \check@mathfonts}%
\def\nobreakdef@#1#2#3{\hbox{{%
                    \everymath{#1}%
                    \let\f@size#2\selectfont%
                    #3}}}%
\makeatother


\renewcommand{\k}{\mathsf{k}}

\newcommand{\rank}{\operatorname{rank}}


\newcommand{\numberseries}{\mdseries}   

\newlength{\thmtopspace}                
\newlength{\thmbotspace}                
\newlength{\thmheadspace}               
\newlength{\thmindent}                  

\setlength{\thmtopspace}%
{0.7\baselineskip plus 0.35\baselineskip minus 0.2\baselineskip}
\setlength{\thmbotspace}%
{0.45\baselineskip plus 0.15\baselineskip minus 0.1\baselineskip} 
\setlength{\thmheadspace}{0.5em}
\setlength{\thmindent}{0pt}     

\renewcommand{\subparagraph}{\vspace*{\thmbotspace}}


\newtheoremstyle{bfupright head,slanted body}
                {\thmtopspace}{\thmbotspace}
                {\slshape}{\thmindent}{\bfseries}{.}{\thmheadspace}
                {{\numberseries \thmnumber{(#2) }}\thmnote{#3}}

\newtheoremstyle{bfupright head,upright body}
                {\thmtopspace}{\thmbotspace}
                {\upshape}{\thmindent}{\bfseries}{.}{\thmheadspace}
                {{\numberseries \thmnumber{(#2) }}\thmnote{#3}}

\newtheoremstyle{bfit head,upright body}
                {\thmtopspace}{\thmbotspace}
                {\upshape}{\thmindent}{\upshape}{.}{\thmheadspace}
                {{\numberseries\thmnumber{(#2) }}
                {\bfseries\itshape\thmnote{\negthickspace#3}}}

\newtheoremstyle{it head,upright body}
                {\thmtopspace}{\thmbotspace}
                {\upshape}{\thmindent}{\upshape}{.}{\thmheadspace}
                {{\numberseries\thmnumber{(#2) }}
                {\itshape\thmnote{\negthickspace#3}}}


\newtheoremstyle{fixed bf head,slanted body}
                {\thmtopspace}{\thmbotspace}{\slshape}
                {\thmindent}{\bfseries}{.}{\thmheadspace}
                {{\numberseries \thmnumber{(#2) }}\thmname{#1}\thmnote{ (#3)}}

\newtheoremstyle{fixed bf head,upright body}
                {\thmtopspace}{\thmbotspace}{\upshape}
                {\thmindent}{\bfseries}{.}{\thmheadspace}
                {{\numberseries \thmnumber{(#2) }}\thmname{#1}\thmnote{ (#3)}}

\newtheoremstyle{fixed bfit head,upright body}
                {\thmtopspace}{\thmbotspace}{\upshape}
                {\thmindent}{\bfseries\itshape}{.}{\thmheadspace}
                {{\numberseries \thmnumber{(#2) }}\thmname{#1}\thmnote{ (#3)}}

\newtheoremstyle{sc head,small body}
                {\thmtopspace}{\thmbotspace}
                {\small\upshape}{\thmindent}{\scshape}{.}{\thmheadspace}
                {\thmname{#1}}


\newtheoremstyle{numbered paragraph}
                {\thmtopspace}{\thmbotspace}{\upshape}
                {\thmindent}{\upshape}{}{0pt}
                {{\numberseries \thmnumber{(#2) }}}

\newtheoremstyle{unnumbered paragraph}
                {\thmtopspace}{\thmbotspace}{\upshape}
                {\parindent}{\upshape}{}{0pt}



\theoremstyle{bfupright head,slanted body}
\newtheorem{res}{}[section]             \newtheorem*{res*}{}

\theoremstyle{bfit head,upright body}
                 \newtheorem*{com*}{}

\theoremstyle{bfupright head,upright body}
\newtheorem{bfhpg}[res]{}               \newtheorem*{bfhpg*}{}

\theoremstyle{it head,upright body}
               \newtheorem*{ithpg*}{}


\theoremstyle{sc head,small body}

\theoremstyle{fixed bf head,slanted body}
\newtheorem{thm}[res]{Theorem}          \newtheorem*{thm*}{Theorem}
\newtheorem{prp}[res]{Proposition}      \newtheorem*{prp*}{Proposition}
        \newtheorem*{cor*}{Corollary}
\newtheorem{lem}[res]{Lemma}            \newtheorem*{lem*}{Lemma}

\theoremstyle{fixed bf head,upright body}
       \newtheorem*{dfn*}{Definition}
     \newtheorem*{con*}{Construction}
      \newtheorem*{obs*}{Observation}
           \newtheorem*{rmk*}{Remark}
          \newtheorem*{exa*}{Example}
         \newtheorem*{exe*}{Exercise}
            \newtheorem{stp*}{Setup}


\theoremstyle{numbered paragraph}

\theoremstyle{unnumbered paragraph}
\newtheorem{ipg*}{}



\newlength{\thmlistleft}        
\newlength{\thmlistright}       
\newlength{\thmlistpartopsep}   
\newlength{\thmlisttopsep}      
\newlength{\thmlistparsep}      
\newlength{\thmlistitemsep}     

\setlength{\thmlistleft}{2.5em}
\setlength{\thmlistright}{0pt}
\setlength{\thmlistitemsep}{0.5ex}
\setlength{\thmlistparsep}{0pt}
\setlength{\thmlisttopsep}{1.5\thmlistitemsep}
\setlength{\thmlistpartopsep}{0pt}


\newcounter{eqc} 
  {\end{list}}%





\newcounter{prt}
  {\end{list}}%




\newcounter{rqm}
  {\end{list}}%




%
  {\end{list}}%


%
  {\end{list}}%


\newcounter{exercise}
  {\end{list}}%


%




\newenvironment{prf*}[1][Proof]{%
  \begin{proof}[\bf #1]
    \setcounter{equation}{0}
    \renewcommand{\theequation}{\arabic{equation}}}
  {\end{proof}
}









\newcommand{\pgref}[1]{(\ref{#1})}


\renewcommand{\eqref}[1]{\pgref{eq:#1}}



\begin{document}

\sloppy \allowdisplaybreaks[4]

\title{Constructing Big Indecomposable modules}

\author[A. Crabbe]{Andrew Crabbe}
\email{s-acrabbe1@math.unl.edu}
\author[J. Striuli]{Janet Striuli} 
\thanks{ The second author was partially supported by NSF grant DMS~0201904}
\email{jstriuli2@math.unl.edu}
\address{Department of Mathematics, University of Nebraska, Lincoln,
  NE~68588, U.S.A.}

\date{\today}


\date{\today}
\keywords{Indecomposable modules, maximal Cohen-Macaulay modules, rank}
\subjclass[2000]{13H10, 13C14, 13E05}
\begin{abstract}
  Let $R$ be local Noetherian ring of depth at least two.  We prove
  that there are indecomposable $R$-modules which are free on the
  punctured spectrum of constant, arbitrarily large, rank. 
\end{abstract}

\maketitle

\section{introduction}
A fruitful approach to the study of a commutative ring is to understand the
category of its finitely generated modules. Over zero dimensional rings,
it is feasible to understand the entire category by classifying its indecomposable objects; however, over larger rings, this classification is not a viable possibility.  A more tractable category that still encodes important information about the ring is that of maximal Cohen-Macaulay modules. For example, finiteness conditions on the category
of maximal Cohen-Macaulay modules detect certain types of singularities
\cite{Aus-85}, \cite{BGS-87}. Such finiteness conditions cannot necessarily be applied to other
categories of modules, as one can build arbitrarily {\it big}
indecomposable modules which are not maximal Cohen-Macaulay.

Our interest was raised by work of W. Hassler, R. Karr, L. Klinger and
R. Wiegand. Indeed, they build indecomposable modules of arbitrarily
large rank over hypersurface singularities with positive dimension
which are not $A_1$-singularities (\cite{HKKW-06}), and over rings
which are not homomorphic images of Dedekind-like rings
(\cite{HKKW-07}, \cite{HKKW-08}, \cite{HaWi-06}).  While over
hypersurfaces the indecomposable modules are free of constant rank on
the punctured spectrum, in the latter case, such modules are known to
be free after localizing only at a finite number of primes. Our main
result is the following theorem, which gives indecomposable modules
free, of arbitrarily large rank, on the punctured spectrum for any
ring of depth at least two.
\begin{thm}\label{mainIntro} 
  Let $R$ be a local Noetherian ring of depth at least two.  Given any
  integer $r$, there exists a short exact sequence of finitely
  generated $R$-modules:
  \[0 \to T \to X \to M \to 0,\] in which $T$ has finite length, $M$
  has positive depth, $X$ is indecomposable, and $M$ and $X$ are free
  of constant rank on the punctured spectrum, and such rank is at
  least $r$.  If further $R$ is Cohen-Macaulay, $M$ can be chosen
  maximal Cohen-Macaulay.
\end{thm}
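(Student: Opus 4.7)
The plan is to build $M$ as a high syzygy of $k = R/\mathfrak{m}$, so that freeness of constant rank on the punctured spectrum comes for free, and then to realize $X$ as a carefully chosen non-split extension $0 \to T \to X \to M \to 0$ whose class $\xi \in \mathrm{Ext}^1_R(M,T)$ prevents any non-trivial direct-sum decomposition.

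First I would construct $M$. Let $N = \Omega^n_R(k)$ be the $n$-th syzygy of $k$ in a minimal free resolution. Since $k_\mathfrak{p} = 0$ for $\mathfrak{p} \neq \mathfrak{m}$, each $N_\mathfrak{p}$ is free of rank $\beta_n(k)$, so $N$ is free of constant rank on the punctured spectrum. For $n \geq 1$ the depth lemma gives $\depth N \geq \min\{\depth R, n\} \geq 1$, hence positive depth; when $R$ is Cohen-Macaulay of dimension $d$ one takes $n = d$ to obtain $N$ maximal Cohen-Macaulay. If $\beta_n(k) < r$, replace $N$ by $M = N^{\oplus s}$ with $s$ large enough (or increase $n$, which is possible except in the hypersurface case where bounded Betti numbers force the direct-sum trick). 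In any case, $M$ meets all requirements of the theorem except indecomposability.

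Next, set $T = k^{\oplus t}$ for a parameter $t$ to be fixed below, and define $X$ by a class $\xi \in \mathrm{Ext}^1_R(M, T)$. For $\mathfrak{p} \neq \mathfrak{m}$ we have $T_\mathfrak{p} = 0$, hence $X_\mathfrak{p} \cong M_\mathfrak{p}$, so $X$ is free of the same constant rank $\geq r$ on the punctured spectrum. To arrange that $T$ is canonically attached to $X$ (so that endomorphisms of $X$ respect it), I would ensure that $T$ equals the $\mathfrak{m}$-torsion submodule $\Gamma_{\mathfrak{m}}(X)$; this holds because $\depth M \geq 1$ forces $\Gamma_\mathfrak{m}(M) = 0$, so $\Gamma_\mathfrak{m}(X) \subseteq T$, and the reverse inclusion is automatic since $T$ has finite length.

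The main obstacle is indecomposability of $X$. Because $T = \Gamma_\mathfrak{m}(X)$ is canonical, every $\alpha \in \mathrm{End}_R(X)$ restricts to $\alpha_T \in \mathrm{End}_R(T)$ and descends to $\alpha_M \in \mathrm{End}_R(M)$; the existence of such a lift forces the compatibility
$(\alpha_T)_{\ast}\,\xi = (\alpha_M)^{\ast}\,\xi$
in $\mathrm{Ext}^1_R(M, T)$. I would then exhibit components $(\xi_1, \ldots, \xi_t)$ of $\xi$ in $\mathrm{Ext}^1_R(M, k)^{\oplus t}$ in sufficiently general position that this equation forces any idempotent $\alpha$ to have $\alpha_M$ and $\alpha_T$ scalar modulo the Jacobson radicals of their endomorphism rings; idempotency then leaves only $\alpha = 0$ or $\alpha = \mathrm{id}$, and $X$ is indecomposable. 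The hard part is producing such generic classes: it requires understanding the $(\mathrm{End}_R(M), \mathrm{End}_R(T))$-bimodule structure on $\mathrm{Ext}^1_R(M,k)$, and crucially relies on $\mathrm{Ext}^1_R(M,k)$ having large $k$-dimension, which for $M$ a sufficiently high syzygy is guaranteed by the hypothesis $\depth R \geq 2$ (so that one can play $\mathrm{Ext}^1_R(M,k)$ off against the finite number of $\mathrm{End}_R(M)\times\mathrm{End}_R(T)$-orbits of potential idempotent patterns).
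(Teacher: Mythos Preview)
Your outline captures the broad shape of the argument---build $M$ from a syzygy of $k$, extend by a finite-length $T$, and control idempotents of $\operatorname{End}_R(X)$ via the induced pair $(\alpha_T,\alpha_M)$---but the specific choices you make for $T$ and for the ``generic'' $\xi$ do not go through, and the paper's proof hinges on different choices at exactly those points.

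The decisive problem is your choice $T=k^{\oplus t}$.  With $M=N^{\oplus s}$ and $N$ indecomposable (say over the completion, so that $\operatorname{End}(N)/J\cong k$), both $\operatorname{End}(T)/J\cong M_t(k)$ and $\operatorname{End}(M)/J\cong M_s(k)$ are matrix rings with many idempotents, and your compatibility relation $P\bar\xi=\bar\xi Q$ need not force $P,Q$ to be scalar.  Concretely, if $R$ is a hypersurface then $\dim_k\operatorname{Ext}^1_R(N,k)=\beta_{n+1}(k)$ stays bounded (often equal to $1$), so $\bar\xi$ is essentially a single matrix $A\in M_{t\times s}(k)$, and for any idempotent $Q\in M_s(k)$ the pair $(AQA^{-1},Q)$ satisfies the relation when $A$ is invertible.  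Thus ``generic position'' cannot eliminate non-trivial idempotent pairs in this case, and your claim that $\operatorname{depth}R\ge2$ makes $\operatorname{Ext}^1_R(M,k)$ large enough is not correct: that Ext is fixed once $M$ is fixed and is bounded for hypersurfaces.

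The paper avoids this by taking $T=R/\mathfrak m^{n}$ \emph{indecomposable} and letting $n$ vary.  Indecomposability of $T$ makes $\operatorname{End}(T)$ local, which is exactly what the Hassler--Wiegand criterion (their (2.2)) needs: it suffices to find $\xi$ with $\operatorname{ann}_{\mathcal B}(\xi)\subseteq J(\mathcal B)$, $\mathcal B=\operatorname{End}(M^{(r)})$.  The existence of such $\xi$ is then reduced (Lemma~2.3) to showing that $\operatorname{Ext}^1_R(M,R/\mathfrak m^{n})$ needs at least $r$ generators as an $\operatorname{End}(M)$-module for some $n$; this follows because the lengths $\lambda(\operatorname{Ext}^1_R(M,R/\mathfrak m^{n}))$ grow like a polynomial of degree $\dim R-1\ge1$ while a fixed power of $\mathfrak m$ kills all of them (Lemma~2.6).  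This growth in $n$ is the engine you are missing.  Finally, the paper passes to the completion to make $\operatorname{End}(N)$ local (needed for Lemma~2.3) and then descends via \cite{FWW-07}; you do not address completion at all, and over a non-complete $R$ an indecomposable $N$ need not have local endomorphism ring, so your idempotent analysis would not even get started.
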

For every $R$-module $N$, denote by $N^{(r)}$ the direct sum of $r$
copies of $N$.  When the ring is complete of dimension at least two
(with no restriction on the depth), we even prove that for every
integer $r$ and indecomposable module $M$, which is free of constant
rank on the punctured spectrum and has positive depth, there exists a
short exact sequence of $R$-modules $0\to T \to X \to M^{(r)} \to 0$,
where $X$ is indecomposable, and $T$ has finite length.  One cannot
relax the condition on the depth as it is known that the rank of
an indecomposable module is at most two certain some rings of
depth one, for example $A_1$-singularity (\cite{KL-05},
\cite{KL-01}, \cite{KL-01A}).

Theorem \ref{mainIntro} is particularly interesting when the ring is
Cohen-Macaulay of {\it bounded Cohen-Macaulay type}, by which we mean
that there is an upper bound on the minimal number of generators for
any indecomposable maximal Cohen-Macaulay module.  Theorem
\ref{mainIntro} says that one cannot possibly hope for such a bound on
indecomposable modules that are not maximal Cohen-Macaulay.

\section{Preliminaries}

In the following, $\Rmk$ is a local Noetherian ring and all modules are
finitely generated. Short exact sequences are identified with elements
of the module $\Ext{1}{-}{-}$; for details see \cite{MacL-75}.

 \begin{bfhpg}[Actions]\label{pullback}
   If $\alpha$ is an element $\Ext{1}{M}{N}$, and $f$
   is an $R$-homomorphism in $\Hom{M'}{M}$, then $\alpha f$ denotes
   the element in $\Ext{1}{M'}{N}$ obtained from the pullback of
   $\alpha$ via $f$, as shown in the following diagram
 \begin{equation*}
    \xymatrix{
      \alpha f \colon 0
      \ar[r]
      & N
        \ar[r]
        \ar@{=}[d]
        & Q
          \ar[r]
          \ar[d]
          & M'
            \ar[r]
            \ar[d]^-{f}
            & 0
      \\
      \alpha  \colon 0
      \ar[r]
      & N
        \ar[r]
        & X
          \ar[r]
          & M
            \ar[r]
            & 0. 
            }     
  \end{equation*}
  Set $\mathcal{B}=\Hom{M}{M}$, if $M=M'$ then the above action gives
  $\Ext{1}{M}{N}$ a structure of $\mathcal{B}$-module. To avoid any
  confusion we write $\Ext{1}{M}{N}_{\mathcal{B}}$.
\end{bfhpg}
In order to present our proof, we need to recall a result from
\cite{HaWi-06} which gives the tool to construct indecomposable
modules.

\begin{bfhpg}
  [Indecomposable modules]\label{roger}\cite[Theorem 2.3]{HaWi-06} Let
  $M$ be a finitely generated $R$-module of positive depth and let $T$
  be an indecomposable finitely generated $R$-module of finite
  length. Set $\mathcal{B}=\Hom{M}{M}$. Suppose there exists an element $\alpha \in \Ext{1}{M}{T}$ such that  $\ann[\mathcal{B}]{(\alpha)} \subseteq J(\mathcal{B})$,
  where $J(-)$ denotes the Jacobson radical.  If $0 \to T \to X\to M
  \to 0$ represents $\alpha$, then $X$ is
  indecomposable.
\end{bfhpg}
When the ring is complete, the following lemma gives us the element $\alpha$.

\begin{lem}\label{andrew2}
  Let $R$ be a complete ring and let $M$, $T$ be finitely generated
  $R$-modules.  Suppose that $M$ is indecomposable and set
  $\mathcal{A} = \Hom{M}{M}$. Let $g_1, \dots, g_r$ be part of minimal
  generating set for $\Ext{1}{M}{T}$ as a right
  $\mathcal{A}$-module. Set $\mathcal{C}=\mathsf{M}_{r\times
    r}(\mathcal{A})$, the ring of $r\times r$ matrices with entries in
  $\mathcal{A}$.  Define $\rho: \mathcal{C}_\mathcal{C} \to
  (\Ext{1}{M}{N}^{(r)})_{\mathcal{C}}$ to be the right
  $\mathcal{C}$-module homomorphism such that for every $\gamma\in
  \mathcal{C}$, $\rho(\gamma)=(g_1,....,g_r)\gamma$, via matrix
  multiplication. Then $\ker (\rho) \subseteq J(\mathcal{C})$.
  
  In particular there exists an element $\alpha \in \Ext{1}{M^{(r)}}{T}$,
  such that $\ann[\mathcal{B}]{(\alpha)}\subseteq J(\mathcal{B})$, where
  $\mathcal{B}:=\Hom{M^{(r)}}{M^{(r)}}$.
\end{lem}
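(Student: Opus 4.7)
The plan is to use that since $R$ is complete and $M$ is finitely generated and indecomposable, the endomorphism ring $\mathcal{A}=\Hom{M}{M}$ is local by the Krull--Schmidt/Azumaya theorem. Hence $\bar{\mathcal{A}}:=\mathcal{A}/J(\mathcal{A})$ is a division ring, and the Jacobson radical of the matrix ring is $J(\mathcal{C})=\mathsf{M}_{r\times r}(J(\mathcal{A}))$. The minimality hypothesis on $g_1,\ldots,g_r$ then translates, via Nakayama's lemma applied to the finitely generated right $\mathcal{A}$-module $\Ext{1}{M}{T}$, into the statement that the images $\bar g_1,\ldots,\bar g_r$ in the right $\bar{\mathcal{A}}$-vector space $\Ext{1}{M}{T}/\Ext{1}{M}{T}J(\mathcal{A})$ are right-linearly independent over $\bar{\mathcal{A}}$.

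With this setup, I would prove $\ker(\rho)\subseteq J(\mathcal{C})$ as follows. A matrix $\gamma=(a_{ij})\in\mathcal{C}$ lies in $\ker(\rho)$ if and only if, for every $i=1,\ldots,r$, the $i$-th coordinate of $(g_1,\ldots,g_r)\gamma$ vanishes, i.e.\ $\sum_{j=1}^r g_j a_{ji}=0$ in $\Ext{1}{M}{T}$. Reducing this relation modulo $\Ext{1}{M}{T}J(\mathcal{A})$ and invoking the right-linear independence of $\bar g_1,\ldots,\bar g_r$ over $\bar{\mathcal{A}}$ forces $\bar a_{ji}=0$, and hence $a_{ji}\in J(\mathcal{A})$, for all $j$ and $i$. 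Therefore $\gamma\in \mathsf{M}_{r\times r}(J(\mathcal{A}))=J(\mathcal{C})$, as required.

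For the ``in particular'' clause, I would use the canonical identifications $\mathcal{B}=\Hom{M^{(r)}}{M^{(r)}}\cong \mathsf{M}_{r\times r}(\mathcal{A})=\mathcal{C}$ and $\Ext{1}{M^{(r)}}{T}\cong \Ext{1}{M}{T}^{(r)}$, and take $\alpha\in\Ext{1}{M^{(r)}}{T}$ to be the element corresponding to the row $(g_1,\ldots,g_r)$. A direct tracking of the pullback action through the matrix isomorphism $\mathcal{B}\cong\mathcal{C}$ shows that, for $b\in\mathcal{B}$ corresponding to $\gamma\in\mathcal{C}$, one has $\alpha b=(g_1,\ldots,g_r)\gamma=\rho(\gamma)$. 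This identifies $\ann[\mathcal{B}]{(\alpha)}$ with $\ker(\rho)$, and the inclusion already proved then gives $\ann[\mathcal{B}]{(\alpha)}\subseteq J(\mathcal{B})$.

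I expect the only real obstacle to be bookkeeping. Since pullback acts contravariantly in the first argument, $\Ext{1}{M}{T}$ is a right $\mathcal{A}$-module, and one must be careful that ``minimal generating set'' refers to right-linear independence modulo $J(\mathcal{A})$ and that the isomorphism $\mathcal{B}\cong\mathsf{M}_{r\times r}(\mathcal{A})$ sends the pullback action on $\Ext{1}{M}{T}^{(r)}$ to right matrix multiplication on row vectors rather than to its transpose. Beyond this verification, the argument is essentially just Azumaya's theorem combined with Nakayama's lemma over the local ring $\mathcal{A}$.
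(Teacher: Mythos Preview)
Your proposal is correct and follows essentially the same approach as the paper: both use that $\mathcal{A}$ is local (by Krull--Schmidt over a complete ring), that $J(\mathcal{C})=\mathsf{M}_{r\times r}(J(\mathcal{A}))$, and that minimality of the $g_i$ forces all entries of a matrix in $\ker(\rho)$ into $J(\mathcal{A})$; the ``in particular'' clause is handled identically via the compatible isomorphisms $\mathcal{B}\cong\mathcal{C}$ and $\Ext{1}{M^{(r)}}{T}\cong\Ext{1}{M}{T}^{(r)}$. The paper cites a lemma from Rotman for the Nakayama step and defers the compatibility verification you flag to an appendix, but the substance is the same.
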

\begin{proof}
  Let $\phi\in \ker (\rho)$ and write $\phi=(a_{ij})$, where $a_{ij}
  \in \mathcal{A}$.  For each $j=1,\dots ,r$, the equality $\sum
  _{i=1}^{r}g_ia_{ij}=0$ holds. Since $M$ is indecomposable, and $R$
  is complete, $\mathcal{A}$ is a local ring with the Jacobson radical
  $J(\mathcal{A})$ as its unique maximal two-sided ideal.  By the
  assumption on $g_1, \dots, g_r$, it follows that $a_{ij} \in
  J(\mathcal{A})$ for each $i$ and each $j$, \cite[Lemma
  4.43]{Rot-79}. Since $J(\mathcal{C}) = \mathsf{M}_{r \times
    r}(J(\mathcal{A}))$, the element $\phi$ belongs to
  $J(\mathcal{C})$; see for example \cite[Exercise 13, page
  433]{Hung-80}.

  For the in particular statement, there are isomorphisms $\phi:
  \Ext{1}{M^{(r)}}{N}\to \Ext{1}{M}{N}^{(r)}$ (see \ref{phi}) and
  $\psi: \mathcal{B} \to \mathcal{C}$ (see \ref{psi}) which are
  compatible with the action of $\mathcal{C}$ on $\Ext{1}{M}{N}^{(r)}$
  (matrix multiplication) and the action of $\mathcal{B}$ on
  $\Ext{1}{M^{(r)}}{N}$ (see \ref{pullback}), for details see
  Proposition \ref{comp}. The element $\alpha=\phi^{-1}((g_1, \dots,
  g_r))$ satisfies $\ann[\mathcal{B}] (\alpha)=\ker{\rho} \subseteq
  J(\mathcal{B})$.
\end{proof}

The following theorem is the first step in proving Theorem
\ref{mainIntro}. In the rest of the paper, given an $R$-module $M$, we
denote by $\nu_R(M)$, the minimal number of generators of $M$ as an
$R$-module.
\begin{thm}\label{main1}
  Let $\Rmk$ be a local complete Noetherian ring and  let
  $M$ be an indecomposable $R$-module of positive depth.
  Assume that 
  \begin{equation}\label{working}
     \limsup_{n\to \infty}\nu_{R}(\Ext{1}{M}{R/\m^n})=\infty.
  \end{equation}
  Let $r$ be an integer. Then
  there exists a short exact sequence
 \[
  \alpha \colon 0 \to R/\m^n \to X \to M^{(r)} \to 0,
 \] 
  such that  $X$ is an  indecomposable $R$-module.
\end{thm}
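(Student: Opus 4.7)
The plan is to choose the exponent $n$ large enough that Lemma~\ref{andrew2} produces the annihilator-forcing element, and then apply Result~\ref{roger}. Lemma~\ref{andrew2} requires $r$ elements forming part of a minimal generating set for $\Ext{1}{M}{R/\m^n}$ as a right module over $\mathcal{A}=\Hom{M}{M}$; the hypothesis~(\ref{working}), by contrast, gives information about generators over $R$. The main obstacle is therefore to translate unboundedness of $\nu_R(\Ext{1}{M}{R/\m^n})$ into unboundedness of $\nu_{\mathcal{A}}(\Ext{1}{M}{R/\m^n})$.

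For this translation, observe that $\mathcal{A}$ is itself a finitely generated $R$-module, since $\Hom{M}{M}$ embeds into a finite direct sum of copies of $M$. Set $C=\nu_R(\mathcal{A})$, a fixed integer independent of $n$. If $\xi_1,\ldots,\xi_s$ generate a right $\mathcal{A}$-module $X$ and $a_1,\ldots,a_C$ generate $\mathcal{A}$ as an $R$-module, then the products $\xi_i a_j$ generate $X$ as an $R$-module, since $R$ is central in $\mathcal{A}$; hence $\nu_R(X)\le C\cdot\nu_{\mathcal{A}}(X)$. Applying this with $X=\Ext{1}{M}{R/\m^n}$ and invoking~(\ref{working}), one can pick $n$ so that $\nu_{\mathcal{A}}(\Ext{1}{M}{R/\m^n})\ge r$.

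Fix such an $n$ and extend an $r$-element subset to a minimal generating set of $\Ext{1}{M}{R/\m^n}$ over $\mathcal{A}$. The ``in particular'' conclusion of Lemma~\ref{andrew2}, applied with $T=R/\m^n$, then yields an element $\alpha\in\Ext{1}{M^{(r)}}{R/\m^n}$ whose annihilator in $\mathcal{B}=\Hom{M^{(r)}}{M^{(r)}}$ is contained in $J(\mathcal{B})$. The module $R/\m^n$ is indecomposable (its endomorphism ring is the local ring $R/\m^n$) and of finite length, while $M^{(r)}$ has positive depth because $M$ does. Result~\ref{roger} therefore applies to $\alpha$ and forces the middle term $X$ of the corresponding extension to be indecomposable, giving the required short exact sequence.
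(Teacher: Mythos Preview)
Your proof is correct and follows essentially the same route as the paper: both compare $\nu_R$ and $\nu_{\mathcal{A}}$ via the finiteness of $\mathcal{A}=\Hom{M}{M}$ as an $R$-module to find $n$ with $\nu_{\mathcal{A}}(\Ext{1}{M}{R/\m^n})\ge r$, then invoke Lemma~\ref{andrew2} and Result~\ref{roger}. You additionally spell out why $\mathcal{A}$ is module-finite over $R$ and why $R/\m^n$ is indecomposable, details the paper leaves implicit.
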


\begin{proof} Let $\mathcal{A}$ be the finitely generated $R$-algebra
  $\Hom{M}{M}$. Suppose that $\nu_R(\mathcal{A})=h$.  Then the following holds:
  \[
  \nu_{\mathcal{A}}(\Ext{1}{M}{R/\m^n})\geq
  \nu_R(\Ext{1}{M}{R/\m^n})/h, \; \text{and so}\]
  \[\limsup_{n\to
    \infty}\nu_{\mathcal{A}}(\Ext{1}{M}{R/\m^n})=\infty.\]
   
  For a given integer  $r$, choose $n$ such that
  $\nu_{\mathcal{A}}(\Ext{1}{M}{R/\m^n})>r$. Let $g_1, \dots , g_r$ be
  part of a minimal generating set of $ \Ext{1}{M}{R/\m^n}$ as an
  $\mathcal{A}$-module.  By Lemma \ref{andrew2}, there exists $\alpha \in
  \Ext{1}{M^{(r)}}{R/\m^n}$ such that $\ann[\mathcal{B}]{(\alpha)}\subseteq
  J(\mathcal{B})$, where $\mathcal{B}:=\Hom{M^{(r)}}{M^{(r)}}$. Let $\alpha$ be
  represented by the short exact sequence $0 \to R/\m^n \to X \to
  M^{(r)}\to 0$. By \ref{roger}, the $R$-module $X$ is indecomposable.
\end{proof}

The next step is to find modules for which  equation (\ref{working})
in Theorem \ref{main1} is satisfied.

\begin{bfhpg}[Generalized Hilbert polynomials]\label{GHP} Let $M$ be a
  finitely generated $R$-module. It is known that the lengths of the
  $R$-modules $\Ext{1}{M}{R/\m^nR}$ are given by a polynomial
  $\xi_{M,\m}(n)$ for all $n$ large enough (see \cite{KaTh-07},
  \cite{Kod-93}). In \cite{CKST-08}, the authors give an exact formula
  for the degree of this polynomial; in particular when
  $M$ has
  constant rank on the punctured spectrum, the degree is $\dimR -1$.
 \end{bfhpg}
Often if a module $M$ is free on the punctured spectrum then it has
constant rank.  This is the result of the following lemma which relies
on Hartshorne's Connectedness Theorem: if the depth of a local
Noetherian ring is at least two, then the punctured spectrum is
connected.
\begin{lem} \label{H} Let $R$ be a local Noetherian ring of
  depth at least two. Let $M$ be an $R$-module free in the
  punctured spectrum. Then $M$ is free of constant rank in the
  punctured spectrum.
\end{lem}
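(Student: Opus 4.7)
The plan is to use Hartshorne's Connectedness Theorem together with the observation that, on the punctured spectrum, the locus where $M$ is free of a given rank $n$ is open. Then a finite open partition of a connected space has only one nonempty piece.

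More concretely, for each integer $n \ge 0$ set
\[ V_n \dis \setof{\p \in \Spec{R}\setminus\set{\m}}{M_\p \text{ is free of rank } n \text{ over } R_\p}. \]
By hypothesis the $V_n$ partition the punctured spectrum. First I would observe that the partition is finite: if $\nu_R(M) = s$, then $M_\p$ is generated by at most $s$ elements over $R_\p$ for every $\p$, so $V_n = \emptyset$ whenever $n > s$.

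Next I would show each $V_n$ is open in $\Spec{R}\setminus\set{\m}$. Fix $\p \in V_n$ and pick $m_1,\dots,m_n \in M$ whose images form a free basis of $M_\p$. Define $\f\colon R^n \to M$ by $\f(e_i) = m_i$. Then $\f_\p$ is an isomorphism, so $(\Ker{\f})_\p = 0$ and $(\Coker{\f})_\p = 0$. Since $R$ is Noetherian and $M$ and $R^n$ are finitely generated, both $\Ker{\f}$ and $\Coker{\f}$ are finitely generated, hence their supports are closed in $\Spec R$. Therefore $\f$ is an isomorphism on an open neighborhood $U$ of $\p$ in $\Spec R$. On $U \cap (\Spec{R}\setminus\set{\m})$ the module $M$ is free of rank $n$, so $V_n$ is open.

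Finally, Hartshorne's Connectedness Theorem gives that $\Spec{R}\setminus\set{\m}$ is connected because $\dptR \ge 2$. A finite partition of a connected space into open sets must have exactly one nonempty member, so there is a unique $n$ with $V_n = \Spec{R}\setminus\set{\m}$, i.e., $M$ is free of constant rank on the punctured spectrum. The main conceptual ingredient is the connectedness theorem; the rest is just the standard fact that ``being free of a given rank'' is an open condition for finitely generated modules over a Noetherian ring.
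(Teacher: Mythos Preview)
Your proof is correct and shares its decisive ingredient with the paper's argument: Hartshorne's Connectedness Theorem for the punctured spectrum when $\dpt{R}\ge 2$. The setups differ, however. The paper argues by contradiction through the minimal primes: it groups the minimal primes according to whether the rank of $M$ there attains the maximum value $t$, forms ideals $I$ and $J$ as the intersections of the two groups, and shows $\mathcal{V}(I)\cap\mathcal{V}(J)=\{\m\}$ by observing that a non-maximal prime containing minimal primes from both groups would force two different ranks on $M$ at that prime; the complements of $\mathcal{V}(I)$ and $\mathcal{V}(J)$ then disconnect the punctured spectrum. Your route is more direct: you partition the punctured spectrum itself by the value of the rank and show each piece is open via the standard ``free of rank $n$ is an open condition'' argument for finitely generated modules over a Noetherian ring. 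This avoids the detour through minimal primes and gives a cleaner proof; the paper's version, on the other hand, makes the mechanism of contradiction explicit (two incompatible ranks at a single prime) rather than packaging it inside the openness of the $V_n$.
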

\begin{proof} For $i=1, \dots, n$, let $\mathfrak{p}_i$ be the minimal
  primes of $R$, and set
  $t=\max\{\rank_{R_{\mathfrak{p}_i}}(M_{\mathfrak{p}_i}) \mid i=1,
  \dots ,n\}$. We wish to show that $M$ is free of constant rank on
  the punctured spectrum and we will proceed by contradiction. Without
  loss of generality, we may assume that $\rank_{R_{\mathfrak{p}_i}}
  (M_{\mathfrak{p}_i})=t$ for all $i\leq s$ and
  $\rank_{R_{\mathfrak{p}_i}} (M_{\mathfrak{p}_i})<t$ for $s < i \leq
  n$. Set $I=\cap_{i=1}^{s}\mathfrak{p}_i$ and
  $J=\cap_{i=s+1}^n\mathfrak{p}_i$. For every ideal $L$, let
  $\mathcal{V}(L)$ be the closed set in $\Spec{R}$ which gives the
  support of $R/L$.  We claim that the two open sets
  $\Spec{R}\setminus \mathcal{V}(I)$ and $\Spec{R}\setminus
  \mathcal{V}(J)$ disconnect the punctured spectrum
  $\Spec{R}\setminus\{\m\}$, contradicting Hartshorne's Connectedness
  Theorem, see \cite{Eise-95}.  To prove the claim, it is enough to
  show that $\{\m\}=\mathcal{V}(I)\cap \mathcal{V}(J)$, since
  $\Spec{R}=\mathcal{V}(I)\cup\mathcal{V}(J)$ by the definition of $I$
  and $J$. Assume by contradiction that there exists a prime
  $\mathfrak{q}\neq \m$ in the set
  $\mathcal{V}(I)\cap\mathcal{V}(J)$. Then there exists an $i\leq s$
  and $j>s$ such that $\mathfrak{p}_i\subset \mathfrak{q}$ and
  $\mathfrak{p}_j\subset \mathfrak{q}$. But then
$
t=\rank_{R_{\mathfrak{p}_i}}
(M_{\mathfrak{p}_i})=\rank_{R_{\mathfrak{q}}}
(M_\mathfrak{q})=\rank_{R_{\mathfrak{p}_j}} (M_{\mathfrak{p}_j}) <t$.
\end{proof}
We end the section with a technical lemma.

\begin{lem}\label{janet1}
  Let $\{M_n\}$ be a family of finitely generated $R$-modules of
  finite length. Assume that for fixed integers $s > 0$, $a>0$, and
  $b$, the following hold: $\m^sM_n = 0$ and 
  $\lambda(M_n) \geq an + b$, for all $n\gg 0$. Then for any given
  integer $h$, there exists an $n$ such that $\nu_R(M_n)>h$.
\end{lem}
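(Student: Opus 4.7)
The plan is to argue by contradiction, exploiting the fact that the hypothesis $\m^s M_n = 0$ forces each $M_n$ to be a module over the Artinian local ring $R/\m^s$. Since $R$ is Noetherian and $\m^s$ is an $\m$-primary ideal, the quotient $R/\m^s$ has finite length as an $R$-module; call this length $c = \lambda_R(R/\m^s)$, a constant independent of $n$.

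Suppose, toward a contradiction, that there exists an integer $h$ such that $\nu_R(M_n) \le h$ for all $n$ large. Then each such $M_n$ is a homomorphic image of $R^h$, and since $\m^s M_n = 0$ the surjection factors through $(R/\m^s)^h$. Consequently
\[
\lambda_R(M_n) \le h \cdot \lambda_R(R/\m^s) = hc
\]
for all large $n$. This contradicts the hypothesis $\lambda_R(M_n) \ge an + b$, since $a > 0$ forces the right-hand side to exceed any fixed bound once $n$ is large enough. Hence no such $h$ can serve as a uniform upper bound, and for any given $h$ there must exist some $n$ with $\nu_R(M_n) > h$.

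There is essentially no obstacle: the only ingredient is the elementary observation that a module annihilated by $\m^s$ and generated by $h$ elements has length at most $h\,\lambda_R(R/\m^s)$, which is forced by Nakayama together with the finite length of $R/\m^s$.
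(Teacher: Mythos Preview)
Your proof is correct and in fact cleaner than the paper's. The paper argues directly: it sets $t=\max_{0\le i\le s}\nu_R(\m^i)$, uses the filtration
\[
\lambda(M_n)=\sum_{i=0}^{s-1}\lambda(\m^iM_n/\m^{i+1}M_n),
\]
and applies pigeonhole to find a layer of length at least $(an+b)/s$; since that layer is a $k$-vector space generated by at most $\nu_R(\m^a)\nu_R(M_n)\le t\,\nu_R(M_n)$ elements, one gets $\nu_R(M_n)\ge (an+b)/(st)$. Your argument bypasses the filtration entirely by observing that any $M_n$ with $\m^sM_n=0$ and $\nu_R(M_n)\le h$ is a quotient of $(R/\m^s)^h$, so $\lambda(M_n)\le h\,\lambda(R/\m^s)$. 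This is the same inequality in disguise (indeed $\lambda(R/\m^s)=\sum_{i<s}\nu_R(\m^i)\le st$), but packaged more conceptually and with fewer moving parts. The paper's route has the minor advantage of being constructive about the bound, while yours makes the underlying reason---finite length of $R/\m^s$---completely transparent.
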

\begin{proof}
  Set $t=\max_{i=0, \dots, s}\{\nu_R(\m^i) \}$, let $h$ be a given
  integer and choose $k>th$.  Choose $n$ large enough so that $(a_1n
  + a_0)/s \geq k$. There is an equality on lengths
\[
 \lambda(M_n) = \lambda(M_n/\m
 M_n) + \lambda(\m M_n/\m^2 M_n) + \cdots + \lambda(\m^{s-2}
 M_n/\m^{s-1} M_n) + \lambda(\m^{s-1} M_n),
\]
 and since $\lambda(M_n) \geq a_1n + a_0$,
 there must be an $a < s$ for which $ \lambda(\m^{a} M_n/\m^{a+1}
 M_n)\geq (a_1n + a_0)/s$. This implies  
\begin{align*}
t\nu_R(M_n)  & \geq \nu(\m^a)\nu(M_n) \geq \nu(\m^aM_n) \\
            &= \lambda(\m^{a} M_n/\m^{a+1} M_n)\\
            &\geq (a_1n + a_0)/s \geq k > th,
\end{align*}
 and therefore $\nu_R(M_n)>h$.
 \end{proof}

\section{Main Results}
In this section we prove Theorem \ref{mainIntro} from the
introduction. We first handle the case where the ring is complete. If
$M$ is an $R$-module, we denote by $\Omega^i_R(M)$ the $i$th syzygy of
$M$ in a minimal free resolution.
\begin{thm}\label{main2}
  Let $\Rmk$ be a local complete ring of dimension at least two. Let
  $M$ be an indecomposable $R$-module of positive depth that is free
  of constant rank on the punctured spectrum. Given any integer $r$,
  there exists an integer $n$ and a short exact sequence
  \[ 0 \to R/\m^n \to X \to M^{(r)} \to 0, \] where $X$ is
  indecomposable, free on the punctured spectrum with
  $\rank_{R_{\p}}(X_{\p}) \geq r$, for every prime ideal $\p \neq \m$.
\end{thm}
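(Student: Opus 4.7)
The plan is to verify the hypothesis
\[
\limsup_{n\to\infty} \nu_R(\Ext{1}{M}{R/\m^n}) = \infty
\]
of Theorem \ref{main1}, invoke that theorem to obtain an indecomposable $X$ sitting in a short exact sequence $0 \to R/\m^n \to X \to M^{(r)} \to 0$, and then read off directly that $X$ is free of the asserted rank on the punctured spectrum.

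The main obstacle, and the only nontrivial step, is to produce a \emph{uniform} annihilator for $\Ext{1}{M}{-}$, which is what allows Lemma \ref{janet1} to be applied. Let $\pi\colon F_0 \onto M$ be a surjection from a finitely generated free module and let $C$ denote the cokernel of the map $\Hom{M}{F_0} \to \Hom{M}{M}$ induced by post-composition with $\pi$. Because $M$ is free on the punctured spectrum, $\pi$ splits after localizing at any prime $\p \neq \m$, so $C_\p = 0$ there. Hence $C$ is a finitely generated module supported only at $\m$, so it has finite length, and there exists $s \geq 1$ with $\m^s C = 0$. Every $x \in \m^s$ therefore admits a factorization $x \cdot \mathrm{id}_M = \pi \circ \sigma_x$ for some $\sigma_x\colon M \to F_0$. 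Since the action of $x$ on $\Ext{1}{M}{N}$ is pullback along $x \cdot \mathrm{id}_M$, it factors through $\Ext{1}{F_0}{N} = 0$, proving that $\m^s$ annihilates $\Ext{1}{M}{N}$ for every $R$-module $N$.

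Next, because $M$ has constant rank on the punctured spectrum, \ref{GHP} guarantees that $\lambda(\Ext{1}{M}{R/\m^n})$ agrees for large $n$ with a polynomial in $n$ of degree $\dimR - 1 \geq 1$. Non-negativity of these lengths forces a positive leading coefficient, so $\lambda(\Ext{1}{M}{R/\m^n}) \geq an + b$ for some $a > 0$ and all sufficiently large $n$. Combined with the uniform annihilator $\m^s$ from the previous step, Lemma \ref{janet1} applied to the family $\{\Ext{1}{M}{R/\m^n}\}$ yields $\nu_R(\Ext{1}{M}{R/\m^n}) \to \infty$, verifying the hypothesis of Theorem \ref{main1}. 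That theorem then produces the desired short exact sequence with $X$ indecomposable.

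Finally, for any prime $\p \neq \m$ we have $(R/\m^n)_\p = 0$, so the sequence localizes to an isomorphism $X_\p \cong (M_\p)^{(r)}$. This is free of rank $r \cdot \rank_{R_\p}(M_\p) \geq r$: indeed, the constant rank of $M$ on the punctured spectrum cannot be zero, since that would force $M$ to be supported only at $\m$ and hence of finite length, contradicting the positive depth assumption.
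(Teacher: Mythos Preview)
Your proof is correct and follows essentially the same route as the paper: establish a uniform annihilator $\m^s$ for $\Ext{1}{M}{-}$, combine with the growth of $\lambda(\Ext{1}{M}{R/\m^n})$ from \ref{GHP} to invoke Lemma~\ref{janet1}, then apply Theorem~\ref{main1} and localize. The only cosmetic difference is in the annihilator step: the paper argues that $\Ext{1}{M}{\Omega^1_R(M)}$ has finite length and that every extension is a pushout of the syzygy sequence, whereas you phrase it dually via factoring $x\cdot\mathrm{id}_M$ through $F_0$ and pulling back through $\Ext{1}{F_0}{N}=0$; these are equivalent, and your added remark that the constant rank of $M$ must be positive is a detail the paper leaves implicit.
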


\begin{proof}
  Since $M$ is free on the punctured spectrum,
  $\Ext{1}{M}{\Omega^1_R(M)}$ is a module of finite length, so
  there exists an integer $s$ such that
  $\m^s\Ext{1}{M}{\Omega^1_R(M)}=0$. Let $\beta$ be the element $0\to
  \Omega^1_R(M) \to F \to M \to 0$ in $\Ext{1}{M}{\Omega^1_R(M)}$, and let
  $N$ be an $R$-module. For every element $\alpha \in \Ext{1}{M}{N}$
  there exists a homomorphism $f \in \Hom{\Omega^1_R(M)}{N}$ such that
  $\alpha=\beta f$.  Therefore $\m^s\subseteq \ann (\Ext{1}{M}{N})$
  for every finitely generated module $N$, and in particular the equality 
  $\m^s\Ext{1}{M}{R/\m^n}=0$ holds for every integer $n>0$.  By
  hypothesis on the dimension of the ring and by \ref{GHP}, the
  lengths of the modules $\Ext{1}{M}{R/\m^n}$ are given by a
  polynomial of degree one.  By Lemma \ref{janet1}, $\limsup_{n\to
    \infty}\nu_R(\Ext{1}{M}{R/\m^n})=\infty$, and by Theorem
  \ref{main1}, for every positive  integer $r$ there exists a short exact sequence
\[ 
0\to R/\m^n \to X \to M^{(r)} \to 0,
\]
where $X$ is indecomposable. For every prime ideal $\p\in
\Spec(R)$, the module $X_{\p}$ is isomorphic to
$M_{\p}^{(r)}$, which is free by hyphotesis.
\end{proof}

We now remove the assumption on $R$ being complete, and prove Theorem
\ref{mainIntro} from the introduction. Given a finitely generated
$R$-module $P$, denote by $H^0_{\m}(P)$ the largest submodule of
finite length of $P$.

\begin{proof} Let $d=\dimR$ and let $N$ be an indecomposable summand
  of the $R$-module $\Omega^d_R(\k)/H^0_{\m}(\Omega^d_R(\k))$. Write $N\otimes
  \widehat{R} =\oplus _{i=1}^kN_i$, where the $N_i$ are indecomposable
  summands of
  $\Omega^d_{\widehat{R}}(\k)/H^0_{\m}(\Omega^d_{\widehat{R}}(\k))$.  The modules $N_i$ have positive depth and are free
  on the punctured spectrum, thus by Lemma \ref{H} and by the
  hypothesis on the depth of the ring, they have constant rank.  Fix an
  integer $n$. For each $i=1, \dots ,k$, by Theorem \ref{main2}, there
  exists an integer $n_i$ and a short exact sequence
\[
\alpha_i :  \; 0 \to R/\m^{n_i} \to Y_i \to N_i^{(n)} \to 0,
\]
 where $Y_i$ is an indecomposable $\widehat{R}$-module. 
Consider the sequence 
\[
\oplus_{i=1}^{k}\alpha_i: \; 0 \to \oplus_{i}^{k}R/\m^{n_i}\to
\oplus_{i}^{k}Y_i \to \oplus_{i=1}^k N_i^{(n)} \to 0.
\]
The right-hand and left-hand modules are extended\footnote{We say that
  an $\widehat{R}$-module $M$ is extended if there exists an
  $R$-module $N$ such that $N \otimes \widehat{R}\cong M$}. Since the
length of $\Ext{1}{N^{(n)}}{\oplus _{i=1}^{k}R/\m^{n_i}}$ is finite, by
\cite[Proposition 3.2(1)]{FWW-07} the middle module is also an
extended module, so say $\oplus_{i=1}^k Y_i \cong Y\otimes
\widehat{R}$, where $Y$ is a finitely generated $R$-module. In
particular, there exists a short exact sequence:
\[
\xymatrix{
0 \ar[r]&\oplus_{i=1}^{k} R/\m^{n_i} \ar[r]^-{\delta} &Y \ar[r]& N^{(n)} \ar[r]& 0,}
\]
where $Y$ is an $R$-module free on the punctured spectrum of constant
rank equal to $n \cdot \rank(N)$.  If $Y$ is indecomposable, then we
are done.  If not, since $Y$ has at most $k$ indecomposable summands, we may choose an indecomposable summand $X$ of $Y$ that is free on the punctured spectrum with rank at least $n \cdot \frac{\rank(N)}{k}$. Write $Y=X \oplus Z$. Let $T$ be $\delta ^{-1}(X)$, and $L=\delta^{-1}(Z)$
and so $\oplus_{i=1}^{k} R/\m^{n_i}= T \oplus L$. Set $M=
X/\delta(T)$ and $P=Z/\delta(L)$.  We have that the following diagram
commutes:
\[
\xymatrix{
  0 \ar[r]&L  \ar[r]^-{\delta |_L} & Z  \ar[r] &P \ar[r] &0\\
  0\ar[r]&  \oplus_{i=1}^{k} R/\m^{n_i}\ar[u]^{\pi}\ar[r]^-{\delta}& Y=X \oplus Z \ar[u]^{\pi}\ar[r] & N^{(n)}\ar[u]^-{f} \ar[r] &0\\
  0 \ar[r] & T \ar[u]^-{\iota}\ar[r]^{\delta|_T}& X
  \ar[r]\ar[u]^-{\iota} & M\ar[r]\ar[u]^-{g}&0,}
\]
where $\iota$ is the inclusion and $\pi$ is the projection. By the
Snake Lemma, $f$ is surjective and $g$ is injective, and therefore
we have a short exact sequence $0 \to M \to N^{(n)} \to P \to 0$.  By
symmetry, there exists a short exact sequence $0 \to P
\to N^{(n)} \to M \to 0$. By the below lemma, $\dpt{M}=\dpt{N}$. Since
$\dpt{N}>0$, the short exact sequence $0 \to T \to X \to M \to 0$ in
the diagram satisfies the conclusion of the theorem. 

If the ring $R$ is Cohen-Macaulay, then $N$ is a direct summand of
$\Omega^d_{R}(\k)$, which is a maximal Cohen-Macaulay
module. 
\end{proof}

\begin{lem}Let $R$ be a local Noetherian ring and assume there are
  two short exact sequences $0\to M \to N \to P \to 0$ and $0 \to P
  \to N \to M \to 0$. Then $\dpt{M}=\dpt{P}=\dpt{N}$.
\end{lem}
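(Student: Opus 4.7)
The plan is to apply the depth lemma to each of the two short exact sequences and combine the resulting inequalities. Recall that for any short exact sequence $0\to A\to B\to C\to 0$ of finitely generated modules over a local Noetherian ring one has
\[
\dpt{B}\ge\min\{\dpt{A},\dpt{C}\},\qquad \dpt{A}\ge\min\{\dpt{B},\dpt{C}+1\},\qquad \dpt{C}\ge\min\{\dpt{A}-1,\dpt{B}\}.
\]
Set $m=\dpt{M}$, $n=\dpt{N}$, $p=\dpt{P}$. Since the hypothesis is symmetric under swapping $M$ and $P$, I may assume $m\le p$.

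I would first show $n=m$. The first sequence yields $n\ge\min\{m,p\}=m$ together with $m\ge\min\{n,p+1\}$; as $p+1>m$, the latter can hold only if $n\le m$, so $n=m$. The harder step is to conclude that also $p=m$. The second sequence gives $p\ge\min\{n,m+1\}=m$, which is consistent with $m\le p$ but does not by itself force $p\le m$. To obtain the reverse inequality I would pass to local cohomology: the degree-$m$ pieces of the two long exact sequences provide an injection $H^m_{\m}(M)\into H^m_{\m}(N)$ from the first sequence and, under the contradiction hypothesis $p>m$ (so that $H^m_{\m}(P)=0$), an injection $H^m_{\m}(N)\into H^m_{\m}(M)$ from the second. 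Since local cohomology of a finitely generated module over a local Noetherian ring is Artinian, and an injective endomorphism of an Artinian module is forced by the descending chain condition to be an isomorphism, both injections become isomorphisms.

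The main obstacle lies precisely in squeezing an actual contradiction out of the resulting identification $H^m_{\m}(M)\cong H^m_{\m}(N)$. The six depth-lemma inequalities are consistent with the scenario $p>m$, so the argument must exploit more than just depths; the plan above exhibits matching Artinian modules on both sides but, by itself, does not yet rule out $H^m_{\m}(P)=0$. Closing this gap should require chasing the two long exact sequences simultaneously in degrees $m+1$ and higher, or appealing to the additional structure available in the setting where this lemma is actually invoked—namely, the snake-lemma diagram preceding the lemma in the proof of Theorem~\ref{mainIntro} in fact identifies $N$ with the direct sum $M\oplus P$, so that $\dpt{N}=\min\{m,p\}$ and the conclusion is immediate.
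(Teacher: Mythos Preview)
Your instinct that the remaining step cannot be closed is correct: the lemma as stated is false. Over $R=\pows{x,y}$ take $M=R$, $P=R/(x)$, and $N=M\oplus P$; the obvious split sequences realize both $0\to M\to N\to P\to 0$ and $0\to P\to N\to M\to 0$, yet $\dpt{M}=2$ while $\dpt{P}=\dpt{N}=1$. No amount of further long-exact-sequence chasing can manufacture the missing contradiction, and since this counterexample already has $N\cong M\oplus P$, your proposed fix does not rescue the lemma's full conclusion either.

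What your argument does establish---and this is exactly what the paper's own proof establishes---is the weaker equality $\dpt{N}=\min\{\dpt{M},\dpt{P}\}$: your first step gives $n=m$ under the standing assumption $m\le p$, hence $n=\min\{m,p\}$ in general. The paper argues via the long exact sequence in $\Ext{*}{\k}{-}$ and only rules out the case $\dpt{M}<\dpt{N}$; it never addresses $\dpt{M}>\dpt{N}$, so it contains precisely the gap you located. Fortunately the application in the proof of Theorem~\ref{mainIntro} only needs $\dpt{M}>0$, for which the inequality $\dpt{M}\ge\dpt{N}>0$ is enough. Your closing observation that the snake-lemma diagram there identifies $N^{(n)}$ with $M\oplus P$ is correct and gives this inequality directly, bypassing the lemma altogether.
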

\begin{proof} It is enough to prove that $\dpt{M}=\dpt{N}$. Recall
  that for every finitely generated $R$-module $L$,
  $\dpt{L}=\min\{i\mid \Ext{i}{\k}{L}\neq 0 \}$ (see for example
  \cite{bruns-herzog}). If $\dpt{M}<\dpt{N}$, then, by applying the
  functor $\Hom{\k}{-}$ to the short exact sequence $0 \to M \to N \to
  P \to 0$, $\dpt{P}< \dpt{M}$, and hence $\dpt{P}< \dpt{N}$. By
  applying the functor $\Hom{\k}{-}$ to the sequence $0\to P\to N \to M \to
  0$, we obtain $\dpt{M}<\dpt{P}$, which is a contradiction.
\end{proof}


\section{Appendix}
In the following we record some known facts about the $R$-module
$\Ext{1}{M}{N}$, where $M$ and $N$ are two finitely generated
$R$-modules. Our main source is \cite{MacL-75}.

Let $N$ and $M_i$, for $i=1, \dots n$, be finitely generated
$R$-modules. In the following, $\mathsf{m}$ denotes an element in $
\oplus_{i=1}^{n}M_i$, and $\mathsf{m}_j \in M_j$ is the $j$th
component of $\mathsf{ m}$. Let $\iota _j: M_j \to \oplus_{i=1}^{n}M_i$ be
the inclusion such that if $\iota_j(m)=\mathsf{ m}$ then 
\[
\mathsf{ m}_i=\begin{cases}m \;\text{for}\; i=j,\\
                          0 \;\text{ for}\; i\neq j.\end{cases}
\] 
Let $\pi_j:
\oplus_{i=1}^{n}M_i\to M_j$ be the projection: $\pi_j(\mathsf{m})=\mathsf{m}_j$.
  \begin{bfhpg}[Isomorphism of modules]\label{phi} There exists an
    $R$-isomorphism
  \[
  \phi: \Ext{1}{\oplus_{i=1}^{n} M_i}{N}  \to \oplus_{i=1}^{n} \Ext{1}{M_i}{N},
  \]
  which maps $\alpha$ to $[\alpha \iota _1, \dots ,\alpha \iota_n]$,
  where the right action of $\iota_j$ on $\alpha$ is described in \ref{pullback}
  (see \cite[page 71]{MacL-75}).  Vice versa, if $\alpha_i$ is
  represented by the short exact sequence
  \[
  0 \to N \xra{\iota_i} X_{\alpha_i} \xra{\rho_i} M_i \to 0
  \]
  then $\phi^{-1} ([\alpha _1, \ldots ,\alpha _n])$ is represented as
  the bottom short exact sequence, where $\nabla_N$ is the addition of
  $n$ elements from $N$, and $P$ is the pushout:
  \begin{equation*}
    \xymatrix{
      0
      \ar[r]
      & N^{(n)}
        \ar[r]^-{\oplus \iota_i}
        \ar[d]^-{\nabla _N}
        & \oplus _{i=1}^{n} X_{\alpha_i}
          \ar[r]^{\oplus \rho_i}
          \ar[d]
          & \oplus _{i=1}^{n} M_i
            \ar[r]
            \ar@{=}[d]
            & 0
      \\
       0
      \ar[r]
      & N
        \ar[r]
        & P
          \ar[r]
          & \oplus _{i=1}^{n} M_i
            \ar[r]
            & 0. 
            }     
  \end{equation*}
  \end{bfhpg}
  \begin{bfhpg}[Isomorphism of algebras]
    \label{psi} Let $\mathcal{B}=\Hom{\oplus M_i}{\oplus M_i}$ and let
    $\mathcal{C}$ be the $R$-algebra of $n\times n$ matrices
    $B=(b_{ij})$ where $b_{ij} \in \Hom{M_i}{M_j}$. We have the
    following isomorphism:
\[
\psi: \, \mathcal{B} \to \mathcal{C}, \; \psi(b)=B^T, 
\]
  where $B=(b_{ij})$ and $b_{ij}= \pi_j\circ b \circ \iota_i$.
  \end{bfhpg} 
  On the one hand, $\Ext{1}{\oplus M_i}{N}$ is a right
  $\mathcal{B}$-module, as described in (\ref{pullback}). On the other
  hand, $\mathcal{C}$ acts on $ \oplus_{i=1}^{n} \Ext{1}{M_i}{N}$: let
  $[\alpha_1,\ldots, \alpha_n]$ be a row vector in $\oplus_{i=1}^{n}
  \Ext{1}{M_i}{N}$, and $C=(b_{ij})$ be a matrix in $\mathcal{C}$,
  then $[\alpha_1,\ldots\alpha_n]C$ is given by matrix multiplication,
  where the right action of $b_{ij}$ on $\alpha _i$ is given as in
  (\ref{pullback}).  In the following, it is important to remember
  that a homomorphism acts on the right on short exact sequences, and
  on the left on elements. For example if $f, g \in \Hom{M}{M}$ for
  some module $M$, $m\in M$, and $\alpha \in \Ext{1}{M}{N}$, then we
  write $\alpha fg$ for $(\alpha f)g$ but $g\circ f (m)$ for
  $g(f(m))$.

  \begin{prp}\label{comp}
    Let $\phi$ and $\psi$ be the isomorphisms from \ref{phi} and
    \ref{psi}. The equality $ \phi(\alpha b)=\phi(\alpha)\psi(b)$
    holds for every $\alpha \in \Ext{1}{\oplus_{i=1}^{n}M_i}{N}$ and
    $b \in \mathcal{B}$.
 \end{prp}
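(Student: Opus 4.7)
The plan is to check the identity coordinate-wise under the direct-sum decomposition $\oplus_{i=1}^n \Ext{1}{M_i}{N}$, and to reduce both sides to the same expression via two elementary facts: the splitting identity $\sum_{i=1}^n \iota_i\circ\pi_i = \mathrm{id}_{\oplus_{i=1}^n M_i}$, and additivity of the pullback action in the contravariant argument of Ext, namely $\gamma(f+g)=\gamma f+\gamma g$ for $\gamma\in\Ext{1}{M}{N}$ and $f,g\colon L\to M$.

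For the left-hand side, the $j$th coordinate of $\phi(\alpha b)$ is by \ref{phi} equal to $(\alpha b)\iota_j$, and associativity of successive pullbacks gives $(\alpha b)\iota_j=\alpha(b\circ\iota_j)$ in $\Ext{1}{M_j}{N}$. For the right-hand side, observe from \ref{psi} that $\psi(b)=B^T$ has $(i,j)$-entry $\pi_i\circ b\circ\iota_j$. By the matrix-multiplication description of the right $\mathcal{C}$-action on $\oplus_i\Ext{1}{M_i}{N}$, the $j$th coordinate of $\phi(\alpha)\psi(b)=[\alpha\iota_1,\dots,\alpha\iota_n]\psi(b)$ is
\[
\sum_{i=1}^n (\alpha\iota_i)(\pi_i\circ b\circ\iota_j)\;=\;\sum_{i=1}^n \alpha(\iota_i\circ\pi_i\circ b\circ\iota_j),
\]
and applying additivity of the pullback in the map variable one obtains
\[
\alpha\!\left(\!\left(\sum_{i=1}^n \iota_i\circ\pi_i\right)\!\circ b\circ\iota_j\right)\;=\;\alpha(\mathrm{id}\circ b\circ\iota_j)\;=\;\alpha(b\circ\iota_j),
\]
which matches the $j$th coordinate of $\phi(\alpha b)$.

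The argument is essentially bookkeeping; the only genuine pitfall is the transpose in $\psi(b)=B^T$, which is precisely what makes entry $(i,j)$ of $\psi(b)$ equal to $\pi_i\circ b\circ\iota_j$ (rather than $\pi_j\circ b\circ\iota_i$), so that the sum collapses via $\sum_i\iota_i\circ\pi_i=\mathrm{id}$. Additivity of $\Ext{1}{-}{N}$ in the first argument is standard---it expresses that $\Ext{1}{-}{N}$ is an additive functor from $R$-modules to abelian groups---and can also be read off directly from the pullback construction recalled in \ref{pullback}.
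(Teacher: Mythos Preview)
Your argument is correct. Both sides of the desired identity have $j$th coordinate $\alpha(b\circ\iota_j)$, and your bookkeeping with the transpose is right: the $(i,j)$-entry of $\psi(b)=B^T$ is indeed $b_{ji}=\pi_i\circ b\circ\iota_j$, so the matrix product collapses via $\sum_i\iota_i\circ\pi_i=\mathrm{id}$ exactly as you say.

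The paper takes a different route. Rather than working coordinate-wise for a general $b$, it first uses additivity to reduce to the case where $b$ has a single nonzero component $b_{kl}$, and then computes both $\phi(\alpha b)$ and $\phi(\alpha)\psi(b)$ \emph{explicitly} from the set-theoretic description of the pullback (writing out the modules $X_b$, $Y_j$, $X_j$ as subsets of appropriate direct sums). The computation boils down to the identity $\iota_l\circ b_{kl}=b\circ\iota_k$ of maps $M_k\to\oplus_i M_i$. Your approach is more conceptual: you invoke only functoriality (associativity $(\alpha f)g=\alpha(f\circ g)$ and additivity $\alpha(f+g)=\alpha f+\alpha g$) together with the biproduct identity, and never unpack the pullback construction. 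This is shorter and makes clear that the statement is a formal consequence of $\Ext{1}{-}{N}$ being an additive contravariant functor. The paper's approach, by contrast, is more concrete and self-contained, verifying the compatibility directly from the definitions in \ref{pullback} and \ref{phi} without appealing to general additivity.
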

 \begin{proof}
   To see this, it is enough to show that equality holds for an element
   $b \in \mathcal{B}$ such that all but at most one entry $b_{ij}$ is
   zero, since the map $\phi$ is an $R$-homomorphism and therefore is
   additive.
 
   Fix two integers $k$ and $l$. Fix an element $b\in \mathcal{B}$ such that
   $\psi(b)=B^T$, where $B=(b_{ij})$, and $b_{ij} \neq 0$ if and only if $i=k$ and
   $j=l$. Let $\alpha \in \Ext{1}{\oplus_{i=1}^{n}M_i}{N}$ be represented
   by the short exact sequence:
 \begin{equation*}
 \xymatrix{
    \alpha \colon 0
      \ar[r]
      & N
        \ar[r]
        & X
          \ar[r]^-{g}
          & \oplus _{i=1}^{n} M_i
            \ar[r]
            & 0. 
            }     
  \end{equation*}
  We first compute the left-hand side of the equality \ref{comp};
  $\alpha b$ is given by
 \begin{equation*}
    \xymatrix{
      \alpha b \colon 0
      \ar[r]
      & N
        \ar[r]
        \ar@{=}[d]
        & X_b
          \ar[r]^-{g_b}
          \ar[d]
          & \oplus _{i=1}^{n} M_i
            \ar[r]
            \ar[d]^-{b}
            & 0
      \\
      \alpha \colon 0
      \ar[r]
      & N
        \ar[r]
        & X
          \ar[r]^-{g}
          & \oplus _{i=1}^{n} M_i
            \ar[r]
            & 0, 
            }     
  \end{equation*}
 where $X_b\subset X \oplus (\oplus _{i=1}^{n} M_i)$, is given by  
\[
 X_b=\{(x,\mathsf{m})\mid g(x)=b(\mathsf{m})=[0,\dots, b_{kl} (\mathsf{m}_k),0 \dots ,0]\},
\]
 and $g_b((x,\mathsf{m}))=\mathsf{m}$. 
 
 Now $\phi(\alpha b)=[\alpha b \iota_1, \ldots, \alpha b\iota_n]$, where 
  \begin{equation*}
    \xymatrix{
      \alpha b\iota_j \colon 0
      \ar[r]
      & N
        \ar[r]
        & Y_j
          \ar[r]^-{h_j}
          & M_j
            \ar[r]
            & 0
          }
     \end{equation*}
     where $Y_j \subset X_b\oplus M_j \subset X \oplus (\oplus
     _{i=1}^{n} M_i)\oplus M_j$ is given by
   \[
   Y_j=\{(x,\mathsf{m},m) \mid g(x)=[0,\dots,b_{kl}(\mathsf{m}_k),0 \dots
   ,0],\; \iota_j(m) = g_b((x,\mathsf{m}))=\mathsf{m}\},
   \]
    and $h_j((x,\mathsf{ m},m_j))=m$.
     It follows that
     \begin{enumerate}
     \item $Y_j\cong N \oplus M_j$ for $j \neq k$, and therefore $
       \alpha f\iota_j=0$;
     \item  $Y_k\cong\{(x,m_k)\in X\oplus M_k \mid g(x)=[0,\dots ,b_{kl}(m),
     \dots, 0]\}$.
    \end{enumerate} 
    In particular, $\phi(\alpha b)=[0,\dots,(\alpha
    b)\iota_k,\dots,0]$.

     To compute the right side of the equality, we first compute
     $\phi(\alpha)=[\alpha \iota _1, \dots ,\alpha \iota_n]$, where
     $\alpha\iota _j$ is given as
\begin{equation*}
    \xymatrix{
      \alpha \iota _j \colon 0
      \ar[r]
      & N
        \ar[r]
        \ar@{=}[d]
        & X_j
          \ar[r]^-{g_j}
          \ar[d]
          & M_j
            \ar[r]
            \ar[d]^-{\iota_j}
            & 0
      \\
      \alpha \colon 0
      \ar[r]
      & N
        \ar[r]
        & X
          \ar[r]^-{g}
          & \oplus _{i=1}^{n} M_i
            \ar[r]
            & 0, 
            }     
\end{equation*}
where $X_j=\{(x,m_j)\in X\oplus M_j \mid g(x)=\iota_j(m_j)\}$ and $g_j((x,m_j))=m_j$.
It follows that $\phi(\alpha)\psi(b)=\phi(\alpha)B^T=(0,\dots, (\alpha
\iota_l)b_{kl}, \dots ,0)$, where the $k$-th entry is the only
non-zero one.
 
All that is left to see is that $(\alpha \iota_l)b_{kl}=(\alpha
b)\iota_k$. But there is an equality of homomorphisms
$\iota_lb_{kl}=b\iota_k: M_k \to \oplus_{i=1}^{n}M_i$, so equality
holds.
\end{proof}


\section*{Acknowledgments}
We thank Roger Wiegand and Lars W. Christensen for their 
suggestions.

\bibliographystyle{amsplain}

\end{document}